\documentclass[11pt]{amsart}
\usepackage{amsmath,amssymb,amsthm,mathtools}
\usepackage[margin=1.1in]{geometry}
\usepackage{enumitem}
\usepackage{hyperref}
\newtheorem{theorem}{Theorem}[section]
\newtheorem{lemma}[theorem]{Lemma}
\newtheorem{proposition}[theorem]{Proposition}
\newtheorem{corollary}[theorem]{Corollary}
\theoremstyle{definition}

\theoremstyle{remark}
\newtheorem{remark}[theorem]{Remark}
\newcommand{\R}{\mathbb{R}}
\newcommand{\C}{\mathbb{C}}
\newcommand{\Q}{\mathbb{Q}}
\newcommand{\Z}{\mathbb{Z}}
\newcommand{\N}{\mathbb{N}}
\newcommand{\E}{\mathbb{E}}
\renewcommand{\P}{\mathbb{P}}
\newcommand{\Hc}{\mathcal{H}_c}
\newcommand{\Fc}{\mathcal{F}_c}
\newcommand{\dens}{\bar\delta}
\newcommand{\one}{\mathbf 1}

\title[Unit-distance-free subsets of finite planar sets]{Every $n$ points in the plane contain $(m_1+o(1))\,n$ points with no unit distance}
\author{Shisheng Li}
\author{Yong Su}
\date{October 10, 2026}

\begin{document}
\sloppy
\begin{abstract}
Let $f(n)$ be the largest integer such that every set of $n$ points in the plane contains $f(n)$ points no two at distance $1$, and let $m_1$ be the supremum of the upper densities of Lebesgue measurable planar sets with no two points at distance $1$. Larman and Rogers observed that $f(n)\ge m_1 n$. We prove that $f(n)/n\to m_1$; that is, $f(n)=(m_1+o(1))n$. This is the first half of a conjecture of D\'ucz and Varga, and with the bound $m_1\le 0.247$ of Ambrus, Csisz\'arik, Matolcsi, Varga and Zs\'amboki it gives $f(n)\le(0.247+o(1))n$. The proof builds a random independent subset of the algebraic plane, invariant under algebraic translations and rotations, whose probability of containing the origin is at least the worst finite independence ratio, and turns it into a measurable planar set of at least that upper density. Its only non-classical input is the Haar rigidity theorem for wild character laws from the OpenAI preprint \emph{The Euclidean plane is not five-colorable}, which has a machine-checked Lean proof. The whole argument has been formalized in Lean~4 on top of it.
\end{abstract}

\maketitle

\section{Introduction}

For a finite set $P\subset\R^2$ let $G_P$ be its unit-distance graph (vertices $P$, edges the pairs at Euclidean distance exactly $1$) and $\alpha(P)$ its independence number. Erd\H{o}s~\cite{Er87} asked to estimate
\[
  f(n)=\min\{\alpha(P):P\subset\R^2,\ |P|=n\},
\]
and in particular whether $f(n)\ge n/4$ (Problem~\#1070 in Bloom's database~\cite{EP}). The Moser spindle gives $f(n)\le\frac27n+O(1)$. Matolcsi, Ruzsa, Varga and Zs\'amboki~\cite{MRVZ} proved $f(n)\le(\frac14+o(1))n$ and conjectured that this is sharp. D\'ucz and Varga~\cite{DV} disproved that conjecture by exhibiting a finite unit-distance graph with independence ratio below $1/4$, answering the second question of Erd\H{o}s negatively; their result has been formalized in Lean by Benk\H{o}~\cite{Benko}.

For the lower bound, let $m_1$ be the supremum of the upper densities of measurable sets $A\subset\R^2$ containing no two points at distance $1$. Larman and Rogers~\cite{LR} observed $f(n)\ge m_1n$; Croft~\cite{Croft} constructed such a set of density $\delta_{\rm Croft}=0.22936\ldots$, so $m_1\ge\delta_{\rm Croft}$; and Ambrus, Csisz\'arik, Matolcsi, Varga and Zs\'amboki~\cite{ACMVZ} proved $m_1\le0.2470$, settling Erd\H{o}s's question whether $m_1<1/4$ (Problem~\#232 of~\cite{EP}). D\'ucz and Varga~\cite[\S2.1]{DV} state: ``\emph{Conjecture 1. $f(n)/n=m_1(\R^2)+o(1)$ and in particular that $m_1(\R^2)=\delta_{\rm Croft}$}.'' We prove the first half, $f(n)/n=m_1+o(1)$.

\begin{theorem}\label{thm:main}
$\displaystyle\lim_{n\to\infty}f(n)/n=m_1$. Moreover $f(n)\ge m_1n$ for every $n\ge1$.
\end{theorem}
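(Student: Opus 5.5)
The lower bound $f(n)\ge m_1 n$ is the Larman--Rogers averaging argument. Fix a finite $P$ with $|P|=n$ and a measurable unit-distance-free set $A$ of upper density $\delta$. Average $|(A+t)\cap P|=\sum_{p\in P}\one_A(p-t)$ over $t$ in a large square $[-R,R]^2$, along a sequence of $R$ realizing the upper density. This gives translates with $|(A+t)\cap P|\ge(\delta-\varepsilon)n$. Each $(A+t)\cap P$ is independent in $G_P$, and $\alpha(P)$ is an integer, so letting $\varepsilon\to0$ and $\delta\to m_1$ yields $\alpha(P)\ge m_1 n$.

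Next I reduce the limit statement to a single inequality. Placing $k$ far-apart isometric copies of a worst $n$-set and adding $r$ further points gives $f(kn+r)\le k f(n)+r$. Hence $\limsup_N f(N)/N\le f(n)/n$ for every $n$, so $f(n)/n\to c:=\inf_n f(n)/n$, and $c\ge m_1$ by the first step. It remains to show $c\le m_1$. By definition of $c$, every finite $P\subset\R^2$ satisfies $\alpha(P)\ge c|P|$, and I must produce measurable unit-distance-free sets of upper density at least $c-\varepsilon$.

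The construction is as in the abstract. Let $K$ be the field of real algebraic numbers, and let $G$ be the group of rigid motions of $K^2$ with rotations in $SO_2(K)$. Then $G=K^2\rtimes SO_2(K)$ is countable and solvable, hence amenable; fix a F{\o}lner sequence $(F_k)$ in $G$. Take finite sets $P_k=F_k\cdot 0\subset K^2$ (counted with multiplicity, or chosen so the orbit map is injective enough), and independent sets $I_k\subset P_k$ with density at least $c$. Define a random subset $X_k=g^{-1}I_k$ of $K^2$, where $g$ is uniform in $F_k$. Then $\P(0\in X_k)\ge c$ up to $o(1)$, and the F{\o}lner property makes the law of $X_k$ asymptotically $G$-invariant. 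By compactness of $\{0,1\}^{K^2}$, a subsequential weak limit is a $G$-invariant random set $X\subset K^2$. It is almost surely unit-distance-free, since independence is a closed condition checked pair by pair, and it satisfies $\P(0\in X)\ge c$.

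The main obstacle is passing from this invariant random set on the countable algebraic plane to a Lebesgue measurable set in $\R^2$. The worry is that $X$ could be ``wild'': its law could be measurable only with respect to characters of $K^2$ that are discontinuous on $\R^2$, so that no real set is visible. My plan is to decompose the law spectrally. Consider the correlation functions $t\mapsto\P(0\in X,\,t\in X)$, which are positive definite on $K^2$, and their Bochner representing measures on the dual group $\widehat{K^2}$. Invariance under all algebraic rotations is exactly the hypothesis of the Haar rigidity theorem for wild character laws: it forces the part of the spectral measure on characters not extending continuously to $\R^2$ to be Haar-like. Such a Haar-like part contributes only a constant, i.e.\ independent-coin behaviour, to pair correlations at the scale relevant here, and the unit-distance constraint $\P(0\in X,\,u\in X)=0$ for $|u|=1$ must then be carried by the continuous part. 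I would use this to replace $X$ by a random set whose law factors through continuous characters, namely a stationary measurable random set in $\R^2$ that is almost surely unit-distance-free, with intensity at least $c$. An ergodic-decomposition step then shows that some realization has upper density at least $c$, giving $m_1\ge c$. I expect making this ``factor through continuous characters'' step rigorous to be the hardest part; I would first try to apply the rigidity theorem to each finite-dimensional marginal of the law and then patch the resulting measurable versions together by a Kolmogorov-type extension.
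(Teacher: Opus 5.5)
The Larman--Rogers step and the bound $f(kn+r)\le kf(n)+r$ are correct and match Sections~\ref{sec:conv}--\ref{sec:LR}. Your first gap is the claim $\P(0\in X_k)\ge c-o(1)$. Since $0\in g^{-1}I_k$ iff $g\cdot0\in I_k$, you get $\P(0\in X_k)=\sum_w c_k(w)\one_{I_k}(w)/\sum_w c_k(w)$ with fibre sizes $c_k(w)=\#\{g\in F_k:g\cdot0=w\}$. That is a \emph{weighted} proportion of $I_k$, but the definition of $c$ only controls $|I_k|/|P_k|$. Counting with multiplicity would need a weighted version of $c$, which you do not have a priori. This is precisely the obstruction in Remark~\ref{rem:weights}. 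Injectivity cannot be arranged either: right multiplication by a rotation does not move $g\cdot0$, so near-invariance of the law of $X_k$ forces large fibres, and nothing makes their sizes constant.

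The fix is to average in two stages.
\begin{itemize}
\item For a F{\o}lner set $F_N\subset K^2$ and a maximum independent set $J_N\subset F_N$, take the uniform law of $J_N-z$, $z\in F_N$. Each position is counted once, so the probability of containing $0$ is $|J_N|/|F_N|\ge c$.
\item Pass to a translation-invariant limit.
\item Only then average over F{\o}lner sets of rotations, which fix $0$.
\end{itemize}
Product F{\o}lner sets $A_k\times B_k$ of $G$, with constant fibre size $|B_k|$, would also work.

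Your second gap is the transfer to $\R^2$, which you leave as a plan whose mechanism does not fit. Finite-dimensional marginals of the law of $X$ are neither measures on $\widehat{K^2}$ nor rotation invariant, so Theorem~\ref{thm:rigidity} says nothing about them. The correlation function alone, being just a positive-definite function, produces no set. The missing idea is positivity through a factor (Lemma~\ref{lem:cont}). The vectors on which translations act Euclidean-continuously form $L^2(\Omega,\Fc,\mu)$ for a sub-$\sigma$-algebra $\Fc$. They equal the spectral subspace of the continuous characters and carry a positivity-preserving continuous $\R^2$-action.

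With $f=\one[0\in\omega]$, put $p=\E(f\mid\Fc)$ and $b=f-p$. The spectral measure of $b$ is rotation invariant (as $V_uf=f$ and $V_u$ commutes with the projection) and null on the continuous characters. Hence it is Haar, and $\langle b,T_zb\rangle=0$ for $z\ne0$: the wild part contributes $\|b\|^2\one[z=0]$, not a constant. Cross terms vanish because $T_u$ preserves $\Hc$, so $\langle p,T_up\rangle=\langle f,T_uf\rangle=0$ for algebraic unit $u$, and then for all $u\in S^1$ by continuity. Because $0\le p\le1$, this says $T_xp$ and $T_{x+u}p$ have a.e.\ disjoint supports. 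The set is then the density-point set of $\{h>0\}$, where $h$ is one good sample of a jointly measurable version of $x\mapsto T_xp$. So the object is the support of a random $[0,1]$-valued function, not a projected random set.
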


\begin{corollary}\label{cor:num}
$0.22936\,n\le f(n)\le(0.2470+o(1))\,n$. In particular $f(n)<n/4$ for all large $n$.
\end{corollary}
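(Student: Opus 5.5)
The statement immediately above is Corollary~\ref{cor:num}. I would derive it directly from Theorem~\ref{thm:main} together with the two numerical bounds on $m_1$ quoted in the introduction: Croft's construction, $m_1\ge\delta_{\rm Croft}=0.22936\ldots$, and the upper bound $m_1\le 0.2470$ of Ambrus, Csisz\'arik, Matolcsi, Varga and Zs\'amboki.

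\textbf{Lower bound.} Theorem~\ref{thm:main} gives $f(n)\ge m_1 n$ for every $n\ge 1$. Croft's set is measurable, has no two points at distance $1$, and has density $\delta_{\rm Croft}$, so $m_1\ge \delta_{\rm Croft}$. Since $\delta_{\rm Croft}=0.22936\ldots\ge 0.22936$, we get $f(n)\ge 0.22936\,n$ for all $n$. This part is independent of the limit statement; it is just the Larman--Rogers inequality combined with Croft.

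\textbf{Upper bound.} The limit part of Theorem~\ref{thm:main} says $f(n)/n=m_1+o(1)$. Inserting $m_1\le 0.2470$ gives $f(n)\le (0.2470+o(1))\,n$. For the final claim, note that $0.2470<1/4$ with a fixed gap $\varepsilon=0.003$. Choose $N$ such that $f(n)/n<m_1+\varepsilon/2$ for all $n\ge N$. Then for $n\ge N$,
\[
f(n)<(0.2470+0.0015)\,n<n/4 .
\]

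\textbf{Obstacle.} There is no real obstacle, because all the content lies in Theorem~\ref{thm:main}, specifically the convergence $f(n)/n\to m_1$. The only care needed is to phrase the $o(1)$ statement correctly, so that the strict inequality $f(n)<n/4$ is claimed only for sufficiently large $n$, using the strict gap $0.2470<0.25$.
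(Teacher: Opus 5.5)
Your proposal is correct and is the paper's own argument: the paper also obtains Corollary~\ref{cor:num} directly from Theorem~\ref{thm:main}, combining Croft's lower bound $m_1\ge\delta_{\rm Croft}$ with the bound $m_1\le0.2470$ of Ambrus, Csisz\'arik, Matolcsi, Varga and Zs\'amboki. Your use of the fixed gap $0.2470<1/4$ to deduce $f(n)<n/4$ for all large $n$ is the correct way to read the $o(1)$ statement.
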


Theorem~\ref{thm:main} says that the two natural constants of the problem, the worst finite independence ratio and the best measurable density, coincide; Matolcsi, Ruzsa, Varga and Zs\'amboki~\cite[Conjecture~1]{MRVZ} had conjectured that the finite independence ratio is never below $1/4$, which together with $m_1<1/4$ would have made the two constants differ. Determining the numerical value of $m_1$ remains open.

Write
\[
  \alpha^*=\inf\Big\{\frac{\alpha(P)}{|P|}:\ P\subset\R^2\text{ finite, nonempty}\Big\}.
\]
Theorem~\ref{thm:main} follows from three statements: $f(n)/n\to\alpha^*$ with $f(n)\ge\alpha^*n$ (Section~\ref{sec:conv}); $\alpha^*\ge m_1$ (Larman--Rogers, Section~\ref{sec:LR}); and $m_1\ge\alpha^*$ (Section~\ref{sec:transfer}). The last is the new part. It is a single-label version of the transfer in Section~4 of the OpenAI preprint~\cite{OAI}, which turns an arbitrary proper colouring into a measurable weak colouring; the one change needed is in how the averaging is done (Remark~\ref{rem:weights}).

\subsection*{Formalization}
All of the above is formalized in Lean~4 with Mathlib, building on OpenAI's formalization~\cite{OAIrepo} of the five-colour paper; see Section~\ref{sec:lean}.

\section{Densities}\label{sec:dens}

For a measurable $A\subset\R^2$ and a family of regions $(W_r)_{r}$ put
\[
  \dens_W(A)=\limsup_{r\to\infty}\frac{|A\cap W_r|}{|W_r|},
\]
where $|\cdot|$ is Lebesgue measure. We consider four families: discs $B(0,r)$ or squares $[-r,r]^2$, with $r$ ranging over $\R_{>0}$ or over $\N_{\ge1}$; below $\N=\{1,2,\dots\}$. Let $m_1^W$ be the supremum of $\dens_W(A)$ over measurable unit-distance-free $A$. The database~\cite{EP} uses discs with real $r$.

\begin{remark}\label{rem:conventions}
All four values $m_1^W$ equal $\alpha^*$, so $m_1$ is unambiguous. Indeed, Proposition~\ref{prop:LR} below gives $m_1^W\le\alpha^*$ for each family, and Section~\ref{sec:transfer} constructs, for discs and for squares, a measurable unit-distance-free $A$ with $\limsup_{r\in\N}|A\cap W_r|/|W_r|\ge\alpha^*$; a limsup over $r\in\R$ is at least the limsup over $r\in\N$. The Lean formalization (Section~\ref{sec:lean}) covers two of the four conventions, squares with $r\in\N$ and discs with $r\in\R$; the other two are covered by the argument above.
\end{remark}

\section{Convergence of $f(n)/n$}\label{sec:conv}

\begin{proposition}\label{prop:conv}
$f(n)\ge\alpha^*n$ for every $n$, and $f(n)/n\to\alpha^*$.
\end{proposition}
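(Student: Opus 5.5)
The lower bound is immediate from the definition of $\alpha^*$: for any $P$ with $|P|=n$ we have $\alpha(P)/|P|\ge\alpha^*$, so $\alpha(P)\ge\alpha^* n$, and taking the minimum over $P$ gives $f(n)\ge\alpha^*n$. Consequently $\liminf_n f(n)/n\ge\alpha^*$. It remains to prove $\limsup_n f(n)/n\le\alpha^*$. For that we need, for each $\varepsilon>0$ and all large $n$, an $n$-point set whose independence number is at most $(\alpha^*+\varepsilon)n+o(n)$.

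Fix $\varepsilon>0$ and choose a finite nonempty $Q\subset\R^2$ with $\alpha(Q)/|Q|\le\alpha^*+\varepsilon$. Put $q=|Q|$. Take $k$ copies $Q+v_1,\dots,Q+v_k$, translated by vectors $v_i$ chosen generically. The vectors are picked so that the copies are pairwise disjoint and no point of one copy is at distance exactly $1$ from a point of another copy. Each condition excludes, for the new vector $v_i$, only a finite union of circles and points in $\R^2$, so a suitable choice exists. This can be done inductively, or simply by taking $v_i$ very far apart, e.g.\ $v_i=(10\,i\,D,0)$ where $D$ exceeds the diameter of $Q$ plus $1$. With this choice the unit-distance graph of the union $P_k$ is the disjoint union of $k$ copies of $G_Q$. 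Hence $|P_k|=kq$ and $\alpha(P_k)=k\,\alpha(Q)$.

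For general $n$, write $n=kq+s$ with $0\le s<q$. Adjoin $s$ extra points far from everything; this raises $\alpha$ by at most $s$, and it also keeps the disjoint-union structure. We get an $n$-point set $P$ with
\[
\alpha(P)\le k\alpha(Q)+s\le (\alpha^*+\varepsilon)n+q .
\]
So $f(n)/n\le\alpha^*+\varepsilon+q/n$, and hence $\limsup f(n)/n\le\alpha^*+\varepsilon$. Letting $\varepsilon\to0$ gives the claim.

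None of these steps is a real obstacle. The only point needing care is that the far-apart translation creates no new unit distances and no coincident points. Spacing the copies by more than $\operatorname{diam}(Q)+1$ guarantees both directly.
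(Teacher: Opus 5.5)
Your proof is correct and follows the paper's argument essentially verbatim. The lower bound comes straight from the definition of $\alpha^*$, and the upper bound uses $k$ far-apart translates of a near-optimal configuration plus $n-kq$ isolated points, giving $f(n)\le(\alpha^*+\varepsilon)n+q$; the paper merely fixes $P_0$ and takes the infimum over $P_0$ at the end instead of fixing $\varepsilon$ first.
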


\begin{proof}
If $|P|=n$ then $\alpha(P)\ge\alpha^*n$ by definition; hence $f(n)\ge\alpha^*n$.
Fix a finite $P_0$ with $|P_0|=s$ and $\alpha(P_0)=a$. For $N\ge s$ write $N=ks+r$ with $0\le r<s$. Take $k$ translates of $P_0$ and $r$ single points, placed so that distinct pieces are at distance more than $\operatorname{diam}P_0+2$. The union has $N$ points, its unit-distance graph is the disjoint union of the pieces' graphs, and its independence number is $ka+r$. Hence $f(N)\le ka+r\le Na/s+s$, so $\limsup_Nf(N)/N\le a/s$. Taking the infimum over $P_0$ gives $\limsup_Nf(N)/N\le\alpha^*$.
\end{proof}

\section{The bound $\alpha^*\ge m_1$}\label{sec:LR}

\begin{proposition}[Larman--Rogers~\cite{LR}]\label{prop:LR}
Let $A$ be measurable and unit-distance-free, and let $W$ be any of the four families. Then $\alpha(P)\ge\dens_W(A)\,|P|$ for every finite nonempty $P$. Hence $m_1^W\le\alpha^*$.
\end{proposition}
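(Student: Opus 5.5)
We use the classical averaging argument of Larman and Rogers: average the number of points of a random translate of $P$ that lie in $A$. Fix a finite nonempty $P=\{p_1,\dots,p_n\}$ and a measurable unit-distance-free $A$. For each translation $x\in\R^2$, the set $\{i:\ p_i+x\in A\}$ indexes an independent set of $G_P$. Indeed, if $|p_i-p_j|=1$ then $|(p_i+x)-(p_j+x)|=1$, so both points cannot lie in $A$. Hence
\[
  \sum_{i=1}^n \one_A(p_i+x)\le\alpha(P)\qquad\text{for every }x\in\R^2 .
\]
We integrate this over $x\in W_r$ and divide by $|W_r|$. This gives
\[
  \sum_{i=1}^n\frac{|(A-p_i)\cap W_r|}{|W_r|}\le\alpha(P).
\]

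The next step compares $|(A-p_i)\cap W_r|=|A\cap(W_r+p_i)|$ with $|A\cap W_r|$. The symmetric difference $(W_r+p_i)\,\triangle\,W_r$ has measure $O(r\,|p_i|)$, for discs and for squares alike. Since $|W_r|\asymp r^2$, we get
\[
  \frac{|A\cap(W_r+p_i)|}{|W_r|}\ge\frac{|A\cap W_r|}{|W_r|}-\frac{C|p_i|}{r}.
\]
Summing over $i$ yields
\[
  n\,\frac{|A\cap W_r|}{|W_r|}\le\alpha(P)+\frac{C\sum_i|p_i|}{r}.
\]
We now take $\limsup$ as $r\to\infty$ along the index set of the family, whether $\R_{>0}$ or $\N$. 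The error term tends to $0$, so $n\,\dens_W(A)\le\alpha(P)$.

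Taking the supremum over $A$ gives $m_1^W\,|P|\le\alpha(P)$. Dividing by $|P|$ and taking the infimum over $P$ gives $m_1^W\le\alpha^*$.

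The only point needing care is the boundary estimate $|(W_r+p)\triangle W_r|\le C_W\,r\,|p|$ with $C_W$ independent of $r$, for $r\ge1$. For squares, the translate differs from $W_r$ by at most two strips of width at most $|p|$ and length $O(r)$. For discs, the symmetric difference is contained in the annulus $B(0,r+|p|)\setminus B(0,r-|p|)$, whose area is $4\pi r|p|$. Measurability of all the sets involved is immediate, so Fubini/Tonelli is not even needed beyond integrating indicator functions. The argument is otherwise routine.
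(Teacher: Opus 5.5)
Your proof is correct and follows the paper's argument. You bound $\sum_i\one_A(p_i+x)\le\alpha(P)$ pointwise, integrate over $W_r$ to get $\sum_i|A\cap(W_r+p_i)|\le\alpha(P)\,|W_r|$, absorb a boundary error of order $r|p_i|$ against $|W_r|\asymp r^2$, and let $r\to\infty$. The differences are cosmetic: you bound the full symmetric difference rather than $W_r\setminus(W_r+p_i)$, and your annulus area $4\pi r|p|$ requires $r\ge|p|$, which is harmless since only large $r$ matter.
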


\begin{proof}
Let $P=\{p_1,\dots,p_n\}$, $\rho=\max_j|p_j|$, $\delta=\dens_W(A)$. For each $x$, $(P+x)\cap A$ is an independent set of $G_{P+x}\cong G_P$, so $|(P+x)\cap A|\le\alpha(P)$. By Tonelli,
\[
  \alpha(P)\,|W_r|\ \ge\ \int_{W_r}|(P+x)\cap A|\,dx=\sum_{j=1}^n|A\cap(W_r+p_j)|\ \ge\ n|A\cap W_r|-\sum_{j=1}^n|W_r\setminus(W_r+p_j)|.
\]
For a disc, $W_r\setminus(W_r+p)$ lies in the annulus $\max(0,r-|p|)\le|x|\le r$, of area $\pi r^2-\pi\max(0,r-|p|)^2\le2\pi r|p|$; for a square it lies in two strips of total area at most $2r(|p_1|+|p_2|)\le2\sqrt2\,r|p|$. In both cases $|W_r\setminus(W_r+p)|\le2\pi r|p|$ and $|W_r|\ge\pi r^2$, so $\alpha(P)\ge n|A\cap W_r|/|W_r|-2n\rho/r$. Letting $r\to\infty$ along a sequence realizing the limsup gives $\alpha(P)\ge n\delta$.
\end{proof}

\section{The bound $m_1\ge\alpha^*$}\label{sec:transfer}

\subsection{Setting}
Identify $\R^2$ with $\C$. Let $E=\overline\Q\subset\C$ be the field of algebraic numbers, viewed as a countable discrete additive group, and $K=\{u\in E:|u|=1\}$, a countable discrete multiplicative group. For $x,y\in E$ we have $|x-y|=1$ iff $x-y\in K$. $K$ is dense in the unit circle $S^1$, since $\frac{(1-t^2)+2it}{1+t^2}\in K$ for $t\in\Q$.

Let $D=\widehat{E}$ be the compact (metrizable, as $E$ is countable) dual group of $E$; for $a,z\in E$ and $d\in D$ put $(ad)(z)=d(az)$ and $\psi_z(d)=d(z)$. The Euclidean-continuous characters (continuous for the topology $E$ inherits from $\C$) $C=\{j(\xi):\xi\in\R^2\}$, $j(\xi)(z)=e^{i\xi\cdot z}$, form a subgroup of $D$. It is Borel: $D$ is a compact metrizable group, $j:\R^2\to D$ is a continuous injective homomorphism (injectivity can be tested on $z\in\Q\cup i\Q$), so $C=\bigcup_{m\ge1}j([-m,m]^2)$ is $\sigma$-compact, hence Borel.

\begin{theorem}[Haar rigidity {\cite[Thm.~2.3]{OAI}}]\label{thm:rigidity}
Every $K$-invariant Borel probability measure $\nu$ on $D$ with $\nu(C)=0$ is Haar measure. Equivalently \textup{(}by uniqueness of Fourier coefficients on $D$\textup{)}, $\int_D\psi_z\,d\nu=0$ for all $z\in E\setminus\{0\}$.
\end{theorem}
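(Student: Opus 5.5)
The paper imports Theorem~\ref{thm:rigidity} from \cite{OAI}, where it is proved and formalized. If I had to prove it myself, I would work entirely on the Fourier side. Put $\varphi(z)=\int_D\psi_z\,d\nu$. This is a positive-definite function on the discrete group $E$ with $\varphi(0)=1$. By Bochner's theorem on $D=\widehat E$, $\nu$ is determined by $\varphi$, and $\nu$ is Haar measure exactly when $\varphi=\one_{\{0\}}$; this gives the stated equivalence.

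$K$-invariance of $\nu$ means $\varphi(uz)=\varphi(z)$ for $u\in K$. If $z,w\in E$ satisfy $|z|=|w|\neq0$, then $w/z\in E$ has modulus $1$, so $w/z\in K$. Hence $\varphi(z)=g(|z|)$ for a function $g$ on the set of moduli of algebraic numbers, so $\varphi$ is a radial positive-definite function on the dense subgroup $E\subset\R^2$.

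The plan is to split $\nu=\nu|_C+\nu|_{D\setminus C}$; both pieces are $K$-invariant because $C$ is $K$-invariant. The hypothesis kills the first piece, so everything reduces to one claim: for a $K$-invariant measure carried by the wild characters $D\setminus C$, $\varphi(z)=0$ for every $z\ne0$.

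To prove the claim, I would use a van der Corput / $L^2$-averaging argument. Fix $z\ne0$ and choose rotations $u_1,\dots,u_N\in K$. Invariance gives $\int\frac1N\sum_k\psi_{u_kz}\,d\nu=\varphi(z)$, and Cauchy--Schwarz gives
\[
|\varphi(z)|^2\le\Big\|\frac1N\sum_{k}\psi_{u_kz}\Big\|_{L^2(\nu)}^2=\frac1{N^2}\sum_{k,l}\varphi\big((u_k-u_l)z\big).
\]
So it suffices to choose the $u_k$ so that the off-diagonal correlations $\varphi((u_k-u_l)z)$ average to zero as $N\to\infty$. By radiality these correlations are $g(|u_k-u_l|\,|z|)$. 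The natural choice is $u_k$ that are multiplicatively independent in $K$ with all pairwise distances distinct and "generic". I would then use an ergodic theorem for the action of a free abelian subgroup $\langle u\rangle\subset K$ on $(D,\nu)$: the averages $\frac1N\sum_k\psi_{u^kz}$ converge in $L^2(\nu)$ to the projection onto $\langle u\rangle$-invariant functions. A character $d$ whose restriction to $\Z[u]z$ is $u$-invariant in the limit must have a rigid, rotation-compatible structure on that line. Using many independent $u$ and density of $K$ in $S^1$, such a character is forced to agree with some $j(\xi)$, that is, to be Euclidean-continuous. This would show that the invariant part of $\psi_z$ is supported on $C$, hence vanishes $\nu$-a.e., and therefore $\varphi(z)=0$.

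The main obstacle is the last step: showing that a character which is asymptotically invariant under many algebraic rotations must be continuous. This is a $\times p\times q$-type rigidity statement. Here the algebraic structure of $E$ (in particular that $K$ contains a free group of infinite rank, e.g.\ generated by $\frac{(1-t^2)+2it}{1+t^2}$ for suitable $t$) has to be used in an essential way. I expect this step to need the full argument of \cite{OAI} and not a soft reduction.
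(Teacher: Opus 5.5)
\textbf{Genuine gap.} The paper gives no proof of this theorem. It imports it from \cite[Thm.~2.3]{OAI} (Lean: \texttt{wild\_fourier}), and the only thing it justifies itself is the equivalence of the two formulations. Your first paragraph handles that equivalence correctly; strictly it uses uniqueness of Fourier--Stieltjes coefficients, not Bochner's theorem. Your radiality observation is also right: $w/z\in K$ whenever $|w|=|z|\neq0$. Insofar as you end by deferring to \cite{OAI}, you are doing what the paper does. The argument you sketch as a proof, however, does not reach the one non-trivial step. The splitting $\nu=\nu|_C+\nu|_{D\setminus C}$ is vacuous, since the first piece is $0$ by hypothesis, so your ``claim'' is just the theorem again. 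The van der Corput bound is correct but circular: it controls $\varphi(z)$ only through values of $\varphi$ at the other nonzero points $(u_k-u_l)z$, about which nothing is known.

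More seriously, the mechanism you propose for the decisive step cannot work, because the ingredients you list do not imply the conclusion. Those ingredients are independent rotations, density in $S^1$, and the infinite-rank group generated by the points $\frac{(1-t^2)+2it}{1+t^2}$. Here is a counterexample built from exactly these.
\begin{itemize}
\item Let $K'=K\cap\Q(i)$, which is exactly the group generated by those points. It is free of infinite rank modulo $\{\pm1,\pm i\}$ and dense in $S^1$.
\item Since $\Q(i)$ is a $K'$-invariant subgroup of $E$, Haar measure $\nu'$ on its annihilator $\Q(i)^\perp\subset D$ is $K'$-invariant.
\item $\Q(i)^\perp$ is infinite because $E/\Q(i)$ is infinite, and $C\cap\Q(i)^\perp=\{1\}$ because $\Q(i)$ is dense in $\C$. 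Hence $\nu'(C)=0$.
\item Yet $\int_D\psi_1\,d\nu'=1$, so $\nu'$ is not Haar measure on $D$.
\end{itemize}
Every $d\in\Q(i)^\perp\setminus\{1\}$ is a wild character with $d(uz)=d(z)=1$ for all $u\in K'$ and $z\in\Q(i)$. So your $\langle u\rangle$-ergodic averages of $\psi_{u^kz}$ are identically $1$ on a non-continuous character. ``Asymptotic invariance under many independent rotations forces continuity'' is therefore false. The same construction with any number field $F\not\subset\R$ in place of $\Q(i)$ shows the conclusion fails for $K\cap F$, hence for every finitely generated subgroup of $K$, such as the $\langle u\rangle$ or $\{u_1,\dots,u_N\}$ your plan uses.

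A proof must use all of $K$. At minimum it needs the arithmetic fact that the additive span of $Kz$ is all of $E$ for $z\neq0$. This holds because every real algebraic $r\in[-2,2]$ equals $u+\bar u$ for some $u\in K$, and it is exactly what excludes Haar measures on annihilators of invariant subgroups. On top of that it needs a genuine measure-level argument turning this into $\varphi(z)=0$. That argument is the content of \cite[Thm.~2.3]{OAI}, and your proposal does not supply it.
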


The second form is proved in Lean as \texttt{OAI.PlaneFiveColor.Spectral.wild\_fourier}~\cite{OAIrepo}. We also use the following lemma, whose proof in~\cite{OAI} is a standard spectral argument (spectral theorem for the discrete group $E$, Bochner's theorem on $\R^2$, uniqueness of Fourier coefficients on $D$).

\begin{lemma}[{\cite[Lemma~4.1]{OAI}}]\label{lem:cont}
Let $E$ act by measure-preserving maps on a standard probability space $(\Omega,\mu)$, with Koopman operators $T_z$ on $L^2(\mu)$. Let $\Hc=\{v\in L^2(\mu):\|T_zv-v\|_2\to0\text{ as }z\to0,\ z\in E\}$, the limit in the topology of $\C$. Then:
\begin{enumerate}[label=\textup{(\roman*)}]
\item $\Hc=L^2(\Omega,\Fc,\mu)$ for a sub-$\sigma$-algebra $\Fc$, so the orthogonal projection onto $\Hc$ is $\E(\cdot\mid\Fc)$;
\item $\Hc$ is the range of the spectral projection of $\{T_z\}$ on $C$;
\item the restrictions $T_z|_{\Hc}$ extend to a strongly continuous unitary representation $(T_x)_{x\in\R^2}$ on $\Hc$ preserving the cone of nonnegative functions and fixing $\one$;
\item if unitaries $V_u$ \textup{(}$u\in K$\textup{)} satisfy $V_uT_zV_u^{-1}=T_{uz}$, then $V_u\Hc=\Hc$ and $V_u$ commutes with the projection onto $\Hc$.
\end{enumerate}
\end{lemma}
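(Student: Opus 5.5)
We prove Lemma~\ref{lem:cont} by working with the spectral measure of the unitary representation $z\mapsto T_z$ of the discrete abelian group $E$. The SNAG/spectral theorem gives a projection-valued measure $P$ on the dual group $D$ with $T_z=\int_D\psi_z\,dP$. For $v\in L^2(\mu)$ let $\sigma_v=\langle P(\cdot)v,v\rangle$, so that $\langle T_zv,v\rangle=\int\psi_z\,d\sigma_v$. Put $Q=P(C)$; this is defined because $C$ is Borel, as noted in the setting.

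We prove (ii) first. If $v=Qv$, then $\sigma_v$ is carried by $C=j(\R^2)$. Its pullback $\tau$ is a finite measure on $\R^2$ with
\[
\|T_zv-v\|^2=\int|e^{i\xi\cdot z}-1|^2\,d\tau(\xi).
\]
This tends to $0$ as $z\to0$ by dominated convergence, so $v\in\Hc$. Conversely, suppose $v\in\Hc$. Then $\phi(z)=\langle T_zv,v\rangle$ is positive definite on $E$ and uniformly continuous for the Euclidean topology, because
\[
|\phi(z)-\phi(z')|\le\|v\|\,\|T_{z-z'}v-v\|.
\]
It therefore extends to a continuous positive definite function on $\R^2$. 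By Bochner, the extension is the Fourier transform of a finite measure $\tau$ on $\R^2$. The measures $j_*\tau$ and $\sigma_v$ have the same Fourier coefficients on $D$, so they are equal, and hence $\sigma_v(D\setminus C)=0$. This gives $Pv$-support in $C$, i.e.\ $v=Qv$. So $\Hc=\operatorname{ran}Q$, which is in particular a closed subspace.

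For (iii), set $T_x=\int_C e^{i x\cdot j^{-1}(d)}\,dP(d)$ on $\operatorname{ran}Q$ for $x\in\R^2$. This is a unitary representation that agrees with $T_z$ for $z\in E$, and it is strongly continuous by the same dominated convergence argument. It fixes $\one$ since $T_z\one=\one$. Positivity preservation follows by density: for $x\in\R^2$ take $z_n\in E$ with $z_n\to x$. Then $T_{z_n}v\to T_xv$ in $L^2$, and the cone of nonnegative functions is closed. For (iv), the relation $V_uT_zV_u^{-1}=T_{uz}$ says the spectral measure of the conjugated representation is the image of $P$ under the map $d\mapsto u^{-1}d$. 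The subgroup $C$ is invariant under multiplication by elements of $K$, since $u$ acts on $j(\xi)$ by a rotation of $\xi$. Hence $V_uQV_u^{-1}=Q$. One can also see this directly: $\|T_z V_uv-V_uv\|=\|T_{u^{-1}z}v-v\|$, and $u^{-1}z\to0$ whenever $z\to0$ because $|u|=1$.

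Part (i) is the main obstacle: we must show the closed subspace $\Hc$ has the form $L^2(\Fc)$. The plan is to show $\Hc\cap L^\infty$ is a conjugation-closed algebra containing constants and that $\Hc$ is a lattice. Closure under conjugation is immediate. For products of bounded $v,w\in\Hc$ we use
\[
\|T_z(vw)-vw\|_2\le\|v\|_\infty\|T_zw-w\|_2+\|w\|_\infty\|T_zv-v\|_2,
\]
since $T_z$ is composition with a measure-preserving map. For real $v\in\Hc$ we have $\bigl\|T_z|v|-|v|\bigr\|\le\|T_zv-v\|$, so $|v|\in\Hc$. Truncations $\max(\min(v,M),-M)$ lie in $\Hc$ for the same reason, and they are dense in $\Hc$. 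It follows that $\Hc$ is a closed sublattice containing constants in which the bounded elements form an algebra. Let $\Fc$ be the $\sigma$-algebra generated by $\Hc$ (completed). Every $v\in\Hc$ is $\Fc$-measurable, so $\Hc\subseteq L^2(\Fc)$. For the reverse inclusion, each generating set $\{v>c\}$ with $v\in\Hc$ real has indicator equal to the $L^2$ limit of $\min(1,n(v-c)^+)\in\Hc$. The sets $F$ with $\one_F\in\Hc$ then form a $\sigma$-algebra: it is closed under complements, under finite intersections by products, and under countable unions by monotone $L^2$ limits. This $\sigma$-algebra contains $\Fc$. Simple $\Fc$-functions are dense in $L^2(\Fc)$, so $\Hc=L^2(\Fc)$, and the orthogonal projection onto $L^2(\Fc)$ is $\E(\cdot\mid\Fc)$.
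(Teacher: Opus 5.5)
Your proposal is correct. It follows the route the paper indicates for this cited lemma: the spectral theorem for $E$ on $D$, Bochner's theorem on $\R^2$ and uniqueness of Fourier coefficients on $D$ give (ii), and (iii) and (iv) follow from that. Part (i) comes from the standard fact that a closed subspace containing constants, and stable under conjugation, $1$-Lipschitz truncations and bounded products, is $L^2$ of the $\sigma$-algebra of sets whose indicators it contains.

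One small point should be tightened: your direct check in (iv) only gives $V_u\Hc\subseteq\Hc$. The reverse inclusion follows either from the same computation applied to $V_u^{-1}$ (which need not equal $V_{u^{-1}}$), or directly from your spectral identity $V_uQV_u^{-1}=Q$.
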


\subsection{An invariant random independent set}
Let $\Omega\subset\{0,1\}^E$ be the set of independent sets of the unit-distance graph on $E$. Each edge constraint is clopen, so $\Omega$ is a closed subset of the compact metrizable space $\{0,1\}^E$. Translations $s_z\omega=\omega-z$ ($z\in E$) and rotations $\rho_u\omega=u\omega$ ($u\in K$) are homeomorphisms of $\Omega$, and $s_w\rho_u=\rho_us_{u^{-1}w}$.

We use Følner sequences in $E$ and $K$: finite sets $F_N$ with $|F_N\triangle(F_N+w)|/|F_N|\to0$ for every $w$ (and the multiplicative analogue in $K$). They exist because $E$ and $K$ are countable abelian groups, hence amenable~\cite[Thm.~G.5.1]{BHV}. For $E$ one can take explicitly: enumerate $E=\{e_0,e_1,\dots\}$, let $\Lambda_N\cong\Z^{d_N}$ be the subgroup generated by $e_0,\dots,e_N$ ($E$ is torsion-free), fix a basis of $\Lambda_N$, and let $F_N$ be the box of side $L_N$ in these coordinates, with $L_N$ so large that $|F_N\triangle(F_N+e_j)|/|F_N|<1/N$ for $0\le j\le N$. Since every $w\in E$ is an integer combination of finitely many $e_j$, this gives the Følner property for every $w$.

\begin{lemma}\label{lem:inv}
There is a Borel probability measure $\mu$ on $\Omega$, invariant under all $s_z$ and $\rho_u$, with $\mu(\{\omega:0\in\omega\})\ge\alpha^*$.
\end{lemma}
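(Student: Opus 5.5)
The plan is to build $\mu$ as a weak-$*$ limit of averages of point masses. For each finite window we pick a maximum independent set inside it, then average over translations and rotations. Compactness of $\Omega$ supplies the limit. Invariance under $s_z$ and $\rho_u$ comes from the Følner property. The bound $\mu(0\in\omega)\ge\alpha^*$ comes from a counting identity that uses only the definition of $\alpha^*$.

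\emph{Step 1: the combined group and its Følner sets.} Work with the semidirect product $G=E\rtimes K$ acting on $\Omega$ by $(w,u)\cdot\omega=u\omega-w$. The relation $s_w\rho_u=\rho_us_{u^{-1}w}$ says exactly that this is an action. $G$ is countable and solvable, hence amenable, so it has a Følner sequence of finite sets $\Phi_N\subset G$.

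I would rather not invoke amenability of $G$ abstractly, so I would build the sets by hand. Take a Følner set $R_N\subset K$. Then take $F_N\subset E$ to be a Følner set that is almost invariant under translations by the finite set $\{u^{-1}e_j: u\in R_N,\ j\le N\}$. Here I use that $F_N$ can be chosen Følner for any prescribed finite set of translations, by taking the box in a lattice that contains all of them. Put
\[
  Q_N=\bigcup_{u\in R_N}u^{-1}F_N\subset E ,
\]
a finite set of points.

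\emph{Step 2: the approximating measures.} Let $I_N\subset Q_N$ be a maximum independent set of the unit-distance graph on $Q_N$. Since $Q_N$ is a finite subset of $E\subset\C$, its unit-distance graph is $G_{Q_N}$, so $|I_N|\ge\alpha^*|Q_N|$. Define
\[
  \mu_N=\frac{1}{|R_N|\,|Q_N|}\sum_{u\in R_N}\sum_{q\in Q_N}\delta_{\rho_u s_q I_N}.
\]
Every point mass sits on an element of $\Omega$, because subsets of independent sets are independent and translates and rotations of independent sets are independent.

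The probability that $0$ lies in the random set is computed directly. We have $0\in u(I_N-q)$ iff $q\in I_N$, so
\[
  \mu_N(0\in\omega)=\frac{|I_N|}{|Q_N|}\ge\alpha^* .
\]
The set $\{0\in\omega\}$ is clopen, so this bound survives weak-$*$ limits.

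\emph{Step 3: invariance in the limit (the main obstacle).} Take a weak-$*$ limit point $\mu$ of $(\mu_N)$, which exists since $\Omega$ is compact metrizable.

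Rotation invariance is straightforward. Composing with $\rho_v$ shifts the outer index $u$ to $vu$, and $|R_N\triangle vR_N|/|R_N|\to0$.

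Translation invariance is the delicate part. We have $s_w\rho_u s_q=\rho_u s_{q+u^{-1}w}$, so applying $s_w$ replaces $q$ by $q+u^{-1}w$. This is why $Q_N$ was built to be almost invariant under every $u^{-1}w$ with $u\in R_N$ and $w$ in a fixed finite set. The worry is that $Q_N$ is a union of rotated Følner sets rather than one Følner set, and a union may not inherit the Følner property.

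To avoid this I would simplify the construction: take $Q_N=F_N$ to be a single set, chosen Følner with respect to the finite set $\{u^{-1}e_j:u\in R_N,\ j\le N\}$. Then for fixed $w$ and each $u\in R_N$, the measure $\mu_N$ changes by at most
\[
  \frac{|F_N\triangle(F_N+u^{-1}w)|}{|F_N|}<\frac1N
\]
in total variation, once $w$ lies among $e_0,\dots,e_N$. Every $w\in E$ eventually does, so $\mu$ is $s_w$-invariant for all $w$.

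The order of choices matters: $R_N$ must be chosen first, then $F_N$ adapted to it. This is legitimate because a lattice box can be made as invariant as desired under any finite set of lattice vectors. I would expect this order-of-quantifiers bookkeeping to be the only real subtlety. Finally, a weak-$*$ limit of Borel probability measures on the compact space $\Omega$ is a Borel probability measure, which completes the construction.
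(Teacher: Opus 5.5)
Your final construction (with $Q_N=F_N$) is correct. It differs from the paper's in how the two symmetries are combined.

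The paper takes two successive limits. It first averages $\delta_{J_N-z}$ over $z$ in a F{\o}lner set $F_N\subset E$ alone, obtaining a translation-invariant $\mu_0$ with $\mu_0(0\in\omega)\ge\alpha^*$. It then averages $(\rho_u)_*\mu_0$ over F{\o}lner sets in $K$. These averages are exactly translation invariant, because $(s_w)_*(\rho_u)_*\mu_0=(\rho_u)_*(s_{u^{-1}w})_*\mu_0$, and rotations fix the event $\{0\in\omega\}$.

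You take a single limit of empirical measures over the product set $\Phi_N=\{\rho_us_q:u\in R_N,\ q\in F_N\}$. The price is the order of quantifiers: $F_N$ has to be chosen after $R_N$, almost invariant under the rotated vectors $u^{-1}e_j$.

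What each route buys:
\begin{itemize}
\item The paper's route needs no coordination between the two F{\o}lner sequences.
\item Yours produces $\mu$ directly as one limit of finitely supported measures.
\item Yours also sharpens Remark~\ref{rem:weights}. Since $(\rho_us_q)^{-1}0=q$, every $q\in F_N$ is hit exactly $|R_N|$ times, so the multiplicities $c(w)$ are constant on $F_N$. Averaging the single set $I_N$ over this particular F{\o}lner sequence of $E\rtimes K$ therefore does give the bound $\alpha^*$. The obstruction described in the remark concerns general averaging sets, not product-form ones.
\end{itemize}

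In a clean write-up, define $Q_N=F_N$ from the start and drop the union $\bigcup_u u^{-1}F_N$ from Step~1. Its almost-invariance under $u^{-1}w$ is not controlled by your choice of $F_N$, since $v^{-1}F_N+u^{-1}w=v^{-1}(F_N+vu^{-1}w)$ with $vu^{-1}$ ranging over $R_NR_N^{-1}$.
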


\begin{proof}
The space $\mathcal P(\Omega)$ of Borel probability measures is weak-$*$ compact and metrizable (Riesz--Markov and Banach--Alaoglu), and for a homeomorphism $g$ of $\Omega$ the map $\nu\mapsto g_*\nu$ is weak-$*$ continuous. The set $\{0\in\omega\}$ is clopen, so $\nu\mapsto\nu(0\in\omega)$ is weak-$*$ continuous.

\emph{Translations.} Each $F_N$ is a finite subset of $\R^2$; choose a maximum independent set $J_N\subset F_N$, so $|J_N|\ge\alpha^*|F_N|$ and $J_N-z\in\Omega$ for all $z$. Let $\nu_N=|F_N|^{-1}\sum_{z\in F_N}\delta_{J_N-z}$. Then $\nu_N(0\in\omega)=|J_N|/|F_N|\ge\alpha^*$ and $\|(s_w)_*\nu_N-\nu_N\|_{\rm TV}\le|F_N\triangle(F_N+w)|/|F_N|\to0$. Any weak-$*$ limit point $\mu_0$ is therefore $s_w$-invariant for every $w$ and satisfies $\mu_0(0\in\omega)\ge\alpha^*$.

\emph{Rotations.} Let $G_M$ be Følner sets in $K$ and $\mu_M=|G_M|^{-1}\sum_{u\in G_M}(\rho_u)_*\mu_0$. Each $(\rho_u)_*\mu_0$ is translation invariant, because $(s_w)_*(\rho_u)_*\mu_0=(\rho_u)_*(s_{u^{-1}w})_*\mu_0=(\rho_u)_*\mu_0$; and $(\rho_u)_*\mu_0(0\in\omega)=\mu_0(0\in\omega)$ since $0\in u\omega\iff0\in\omega$. A weak-$*$ limit point $\mu$ of $(\mu_M)$ is translation invariant, rotation invariant by the Følner property, and has $\mu(0\in\omega)\ge\alpha^*$.
\end{proof}

\begin{remark}\label{rem:weights}
In~\cite[Prop.~4.2]{OAI} a single colouring is averaged over Følner sets of $E\rtimes K$. Applied to a fixed maximum independent set $J$ of a finite set and a finite averaging set $\Phi\subset E\rtimes K$, the empirical measure $\nu_\Phi=|\Phi|^{-1}\sum_{g\in\Phi}\delta_{gJ}$ has $\nu_\Phi(0\in\omega)=\sum_wc(w)\one_J(w)/\sum_wc(w)$ with multiplicities $c(w)=\#\{g\in\Phi:g^{-1}0=w\}$ that are not constant in $w$, i.e.\ the $c$-weighted proportion of $J$, for which no lower bound $\alpha^*$ is available a priori. Averaging first over translations (where each position is counted once) and then over rotations (which fix $0$) avoids this.
\end{remark}

\subsection{Projection onto the continuous factor}
Fix $\mu$ from Lemma~\ref{lem:inv}. Since $\Omega$ is compact metrizable, $(\Omega,\mu)$ is a standard probability space. Define unitaries on $L^2(\mu)$ by $T_zF(\omega)=F(\omega-z)$ and $V_uF(\omega)=F(u^{-1}\omega)$; then $T_zT_w=T_{z+w}$ and $V_uT_zV_u^{-1}=T_{uz}$. Let $f=\one[0\in\omega]$. Then $T_zf=\one[z\in\omega]$, $V_uf=f$, $\E f\ge\alpha^*$, and since $0$ and $u\in K$ are adjacent, $f\cdot T_uf=0$ pointwise, so
\begin{equation}\label{eq:prop}
  \langle f,T_uf\rangle=0\qquad(u\in K).
\end{equation}

Let $p=\E(f\mid\Fc)\in\Hc$ and $b=f-p\in\Hc^\perp$ (Lemma~\ref{lem:cont}). Then $0\le p\le1$ a.e., $\E p=\E f$, and $V_up=p$, $V_ub=b$. Both $p$ and $b$ are real-valued, since $f$ is.

Let $\lambda_b$ be the spectral measure of $b$ on $D$: $\langle b,T_zb\rangle=\int_D\psi_z\,d\lambda_b$. By Lemma~\ref{lem:cont}(ii), $\lambda_b(C)=0$. For $u\in K$,
\[
  \langle b,T_{uz}b\rangle=\langle b,V_uT_zV_u^{-1}b\rangle=\langle V_u^{-1}b,T_zV_u^{-1}b\rangle=\langle b,T_zb\rangle,
\]
With $m_u:D\to D$, $m_u(d)(z)=d(uz)$, we have $\int_D\psi_z\,d((m_u)_*\lambda_b)=\int_D\psi_{uz}\,d\lambda_b$, so $(m_u)_*\lambda_b$ and $\lambda_b$ have the same Fourier coefficients, and $\lambda_b$ is $K$-invariant. If $b\ne0$, Theorem~\ref{thm:rigidity} applied to $\lambda_b/\|b\|^2$ gives $\langle b,T_zb\rangle=0$ for $z\ne0$; this is trivial if $b=0$. Since $T_u$ preserves $\Hc$ and $\Hc^\perp$, $\langle p,T_ub\rangle=\langle b,T_up\rangle=0$. Expanding~\eqref{eq:prop},
\[
  \langle p,T_up\rangle=0\qquad(u\in K).
\]
By Lemma~\ref{lem:cont}(iii), $(T_x)_{x\in\R^2}$ is a strongly continuous unitary representation on $\Hc\ni p$, so $u\mapsto\langle p,T_up\rangle$ is continuous and, $K$ being dense in $S^1$, vanishes on $S^1$. Hence for all $x\in\R^2$ and $u\in S^1$,
\begin{equation}\label{eq:corr}
  \int(T_xp)(T_{x+u}p)\,d\mu=\langle T_xp,T_{x+u}p\rangle=\langle p,T_up\rangle=0 .
\end{equation}
Moreover $0\le T_xp\le1$ a.e.\ (apply positivity to $p$ and $\one-p$, using $T_x\one=\one$); in particular $T_xp$ is real-valued, and $\E[T_xp]=\langle T_xp,\one\rangle=\langle p,T_{-x}\one\rangle=\E p\ge\alpha^*$.

\subsection{A measurable field and one sample}

\begin{lemma}\label{lem:field}
Let $x\mapsto v_x$ be a continuous map from $\R^2$ to the real Hilbert space $L^2(\mu;\R)$. There is a jointly measurable $H:\R^2\times\Omega\to\R$ with $H(x,\cdot)=v_x$ $\mu$-a.e.\ for every $x\in\R^2$.
\end{lemma}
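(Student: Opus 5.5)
The plan is to build $H$ as a pointwise limit of jointly measurable simple-in-$x$ approximations, using continuity of $x\mapsto v_x$ to control the error and a Borel–Cantelli argument to get almost-sure convergence.

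\emph{Step 1: representatives on dyadic cells.} For each $k\ge1$ partition $\R^2$ into half-open dyadic squares $Q$ of side $2^{-k}$, and let $c_Q$ be the lower-left corner of $Q$. For each such $Q$ fix a Borel representative $h_Q:\Omega\to\R$ of $v_{c_Q}$. Define
\[
H_k(x,\omega)=\sum_{Q}\one_Q(x)\,h_Q(\omega).
\]
This is a countable sum of products of Borel functions with disjoint supports in $x$, so it is jointly measurable on $\R^2\times\Omega$.

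\emph{Step 2: choice of the limit.} Put $H(x,\omega)=\lim_k H_k(x,\omega)$ where the limit exists, and $H=0$ elsewhere. The set where the limit exists is jointly measurable, being defined by $\limsup$ and $\liminf$ of measurable functions, so $H$ is jointly measurable.

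\emph{Step 3: $H(x,\cdot)=v_x$ a.e.\ for every fixed $x$.} Fix $x$ and let $Q_k(x)$ be the dyadic square of level $k$ containing $x$. Then $H_k(x,\cdot)=v_{c_{Q_k(x)}}$ a.e., and $c_{Q_k(x)}\to x$. By continuity of $x\mapsto v_x$, $\|H_k(x,\cdot)-v_x\|_2\to0$. However, $L^2$ convergence alone only gives an a.e.-convergent subsequence, and that subsequence would depend on $x$. This is the main obstacle.

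To remove the dependence on $x$, I would make the convergence fast uniformly on compacts. Since $v$ is uniformly continuous on each ball $\overline{B(0,R)}$, choose an increasing sequence of levels $k_1<k_2<\cdots$ such that $\|v_y-v_{y'}\|_2\le 4^{-j}$ whenever $|y-y'|\le\sqrt2\,2^{-k_j}$ and $|y|,|y'|\le j+1$. Redefine $H_j:=H_{k_j}$. For fixed $x$ and all $j\ge|x|$,
\[
\|H_j(x,\cdot)-v_x\|_2\le 4^{-j},
\]
so by Chebyshev
\[
\mu\bigl(|H_j(x,\cdot)-v_x|>2^{-j}\bigr)\le 4^{-j}.
\]
By Borel–Cantelli, $H_j(x,\omega)\to v_x(\omega)$ for $\mu$-a.e.\ $\omega$, along the same fixed sequence $(H_j)$ for every $x$. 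Hence the limit in Step 2, now taken along $j$, exists a.e.\ and equals $v_x$ a.e., which gives the statement.

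Everything else is routine. The only point needing care is that the subsequence $k_j$ must be chosen uniformly in $x$, via local uniform continuity, rather than pointwise. Real-valuedness is automatic since all $h_Q$ are real.
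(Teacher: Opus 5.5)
Your argument is correct, but it gets uniformity in $x$ by a different mechanism from the paper's. The common skeleton is this: approximate $v_x$ by jointly measurable $H_j$ that take countably many values in $x$, with $L^2$ errors summable in $j$ at each fixed $x$, and then pass to an a.e.\ limit. You close with Chebyshev and Borel--Cantelli; the paper uses $\sum_j\|H_j(x,\cdot)-v_x\|_1<\infty$ and Tonelli, which amounts to the same thing.

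The paper fixes a dense sequence $(x_k)$ and lets $H_j(x,\cdot)$ be the chosen representative of $v_{x_{k_j(x)}}$, where $k_j(x)$ is the least $k$ with $\|v_{x_k}-v_x\|_2<2^{-j}$. This index is Borel in $x$ because each $x\mapsto\|v_{x_k}-v_x\|_2$ is continuous. It gives the bound $2^{-j}$ for every $x$ and every $j$ without any uniform continuity, so the argument works verbatim for any separable metric parameter space.

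You instead fix the partition in advance (dyadic cells with corner representatives) and get the rate from uniform continuity on the compact balls $\overline{B(0,j+1)}$. This only controls $j\ge|x|$, which is enough because a.e.\ convergence depends only on tails. Note that the corner of the level-$k_j$ cell containing $x$ does stay in $\overline{B(0,j+1)}$, since $\sqrt2\,2^{-k_j}<1$. Your route uses local compactness of $\R^2$, but it produces a more explicit $H_j$, piecewise constant in $x$ on dyadic squares.
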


\begin{proof}
Fix a dense sequence $(x_k)$ in $\R^2$ and a representative of each $v_{x_k}$. For $j\ge1$ let $k_j(x)$ be the least $k$ with $\|v_{x_k}-v_x\|_2<2^{-j}$; it exists by continuity and density, and $x\mapsto k_j(x)$ is Borel because each $x\mapsto\|v_{x_k}-v_x\|_2$ is continuous. Put $H_j(x,\omega)=v_{x_{k_j(x)}}(\omega)=\sum_k\one[k_j(x)=k]\,v_{x_k}(\omega)$, which is jointly measurable as a countable sum of products of measurable functions. For fixed $x$, $\sum_j\|H_j(x,\cdot)-v_x\|_1\le\sum_j\|H_j(x,\cdot)-v_x\|_2<\sum_j2^{-j}<\infty$ ($\mu$ is a probability measure), so by Tonelli $\sum_j|H_j(x,\cdot)-v_x|<\infty$ a.e., and $H_j(x,\cdot)\to v_x$ a.e. Let $H=\limsup_jH_j$ where finite and $0$ elsewhere.
\end{proof}

Apply Lemma~\ref{lem:field} to the real-valued $v_x=T_xp$ and replace $H$ by $\min(\max(H,0),1)$. Fix the family $W$ (discs or squares) and integer radii $r\in\N$. By~\eqref{eq:corr} and Tonelli, for every $r\in\N$,
\[
  \int_\Omega\int_{B(0,r)}\int_{S^1}H(x,\omega)H(x+u,\omega)\,d\sigma(u)\,dx\,d\mu(\omega)=0,
\]
where $\sigma$ is normalized arc-length measure on $S^1$,
and the inner double integral is nonnegative, so it vanishes for $\mu$-a.e.\ $\omega$; intersecting over the countably many $r\in\N$ gives a $\mu$-conull set $\Omega_1$ on which it vanishes for every $r\in\N$. Let $X_r(\omega)=|W_r|^{-1}\int_{W_r}H(x,\omega)\,dx\in[0,1]$. By Tonelli, $\E X_r=\E p\ge\alpha^*$, and by the reverse Fatou lemma ($X_r\le1$), $\E[\limsup_rX_r]\ge\alpha^*$; hence $\mu(\limsup_rX_r\ge\alpha^*)>0$ (otherwise $\alpha^*-\limsup_rX_r>0$ a.e., so its integral would be positive). Fix $\omega\in\Omega_1$ with $\limsup_rX_r(\omega)\ge\alpha^*$ and set $h=H(\cdot,\omega):\R^2\to[0,1]$:
\begin{equation}\label{eq:sample}
  \int_{B(0,r)}\int_{S^1}h(x)h(x+u)\,d\sigma\,dx=0\ \ (r\in\N),\qquad
  \limsup_{r\in\N}\frac1{|W_r|}\int_{W_r}h\ \ge\ \alpha^*.
\end{equation}

\subsection{Density points}
Let $A_0=\{h>0\}$. By~\eqref{eq:sample} the nonnegative function $h(x)h(x+u)$ vanishes for $dx\otimes\sigma$-a.e.\ $(x,u)\in B(0,r)\times S^1$, for every $r$; since $\one_{A_0}(x)\one_{A_0}(x+u)>0$ iff $h(x)h(x+u)>0$, also $\int_{B(0,r)}\int_{S^1}\one_{A_0}(x)\one_{A_0}(x+u)\,d\sigma\,dx=0$ for all $r$, and since $h\le\one_{A_0}$, $\limsup_{r\in\N}|A_0\cap W_r|/|W_r|\ge\alpha^*$. Let $A$ be the set of Lebesgue density points of $A_0$. It is Borel, and $|A\triangle A_0|=0$ by the Lebesgue density theorem, so $A$ has the same densities as $A_0$.

\begin{lemma}\label{lem:dp}
$A$ contains no two points at distance $1$.
\end{lemma}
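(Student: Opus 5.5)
We argue by contradiction. Suppose $a,a'\in A$ with $|a-a'|=1$, and write $a'=a+u_0$ with $u_0\in S^1$. The point is that two density points at unit distance force a positive-measure set of unit-distance pairs inside $A_0$. This contradicts the vanishing of $\int_{B(0,r)}\int_{S^1}\one_{A_0}(x)\one_{A_0}(x+u)\,d\sigma(u)\,dx$ established just above.

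The key step is a quantitative overlap estimate. Fix a small $\varepsilon>0$. Choose $\delta>0$ so small that
\[
|A_0\cap B(a,\delta)|\ge(1-\varepsilon)|B(a,\delta)|, \qquad |A_0\cap B(a',3\delta)|\ge(1-\varepsilon)|B(a',3\delta)|;
\]
this is possible since $a,a'$ are density points. Let $U=\{u\in S^1:|u-u_0|<\delta\}$, so $\sigma(U)>0$. For $x\in B(a,\delta)$ and $u\in U$ we have $x+u\in B(a',2\delta)$. Consider the map $(x,u)\mapsto(x,x+u)$ on $B(a,\delta)\times U$. For each fixed $u\in U$, the translate $B(a,\delta)+u$ is a disc of radius $\delta$ contained in $B(a',3\delta)$. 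The complement of $A_0$ in $B(a',3\delta)$ has measure at most $\varepsilon|B(a',3\delta)|=9\varepsilon|B(a,\delta)|$. Hence
\[
|\{x\in B(a,\delta):x+u\in A_0\}|\ge(1-9\varepsilon)|B(a,\delta)|.
\]
Intersecting with $\{x\in A_0\}$, which has measure at least $(1-\varepsilon)|B(a,\delta)|$, we get
\[
|\{x\in B(a,\delta):x\in A_0,\ x+u\in A_0\}|\ge(1-10\varepsilon)|B(a,\delta)|>0
\]
for every $u\in U$, provided $\varepsilon<1/10$.

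Integrating over $u\in U$ by Tonelli, which is legitimate because $A_0$ is measurable and so $(x,u)\mapsto\one_{A_0}(x)\one_{A_0}(x+u)$ is jointly measurable, we obtain
\[
\int_{B(a,\delta)}\int_{S^1}\one_{A_0}(x)\one_{A_0}(x+u)\,d\sigma\,dx\ \ge\ \sigma(U)(1-10\varepsilon)|B(a,\delta)|>0.
\]
Now pick $r\in\N$ with $B(a,\delta)\subset B(0,r)$. The left side is then at most the corresponding integral over $B(0,r)$, which is zero. This is the desired contradiction.

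The only delicate point is making the overlap estimate uniform in $u$, and that is handled by enlarging the second disc to radius $3\delta$ so that it contains every translate. Beyond that, the argument is routine. Borel measurability of $A$ is not needed for this lemma, only the measurability of $A_0$.
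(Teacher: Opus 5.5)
Your proof is correct and is essentially the paper's argument. The paper phrases the same overlap estimate probabilistically: it takes $x'$ uniform on $B(x,\eta)$ and an independent $u'$ uniform on a short arc around $u$, and uses $B(x+u',\eta)\subset B(y,2\eta)$ for uniform control in $u'$, whereas you do it deterministically via Tonelli with the disc $B(a',3\delta)$ (radius $2\delta$ would already suffice). Both then get a set of unit-distance pairs in $A_0$ of positive $dx\otimes\sigma$-measure inside some $B(0,r)\times S^1$, contradicting the vanishing integral.
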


\begin{proof}
Suppose $x,\,y=x+u\in A$ with $|u|=1$. For $0<\eta<1$ let $x'$ be uniform on $B(x,\eta)$ and, independently, $u'$ uniform on the arc $\{e^{i\theta}u:|\theta|<\eta/2\}$, so $|u'-u|<\eta$. Then $\P(x'\notin A_0)=|B(x,\eta)\setminus A_0|/|B(x,\eta)|\to0$. Given $u'$, $x'+u'$ is uniform on $B(x+u',\eta)\subset B(y,2\eta)$, so $\P(x'+u'\notin A_0)\le|B(y,2\eta)\setminus A_0|/(\pi\eta^2)=4|B(y,2\eta)\setminus A_0|/|B(y,2\eta)|\to0$. So for small $\eta$, $\P(x'\in A_0,\ x'+u'\in A_0)>0$. The law of $(x',u')$ is absolutely continuous with respect to $dx\otimes\sigma$ and is supported in $B(0,r)\times S^1$ for an integer $r>|x|+1$. Hence the set $\{(x',u'):x'\in A_0,\ x'+u'\in A_0\}\cap(B(0,r)\times S^1)$ has positive $dx\otimes\sigma$-measure, contradicting $\int_{B(0,r)}\int_{S^1}\one_{A_0}(x)\one_{A_0}(x+u)\,d\sigma\,dx=0$.
\end{proof}

Thus $A$ is a measurable unit-distance-free set with $\limsup_{r\in\N}|A\cap W_r|/|W_r|\ge\alpha^*$, for $W$ = discs and for $W$ = squares. This proves $m_1^W\ge\alpha^*$ for all four conventions, completing Remark~\ref{rem:conventions} and, with Propositions~\ref{prop:conv} and~\ref{prop:LR}, Theorem~\ref{thm:main}. Corollary~\ref{cor:num} follows from~\cite{Croft,ACMVZ}.

\section{Formalization}\label{sec:lean}

The repository~\cite{repo} contains a Lean~4 project (toolchain v4.34.1, Mathlib \texttt{d13f23b}). The directory \texttt{OAI/} is a verbatim copy of the 70-file import closure of OpenAI's five-colour theorem from~\cite{OAIrepo} (Apache-2.0). Our files define
\begin{itemize}
\item \texttt{indepNum P} (independence number of the unit-distance graph on a \texttt{Finset} $\C$),
\item \texttt{f n}, the minimum of \texttt{indepNum P} over $n$-element \texttt{P},
\item \texttt{upperDensity} (squares $[-m,m]^2$, $m\in\N$) and \texttt{m1},
\end{itemize}
and prove \texttt{f\_div\_tendsto\_m1} ($f(n)/n\to m_1$) and \texttt{m1\_mul\_le\_f} ($m_1n\le f(n)$). The file \texttt{Disc.lean} defines \texttt{m1Ball} with open discs of real radius (the convention of~\cite{EP}) and proves \texttt{f\_div\_tendsto\_m1Ball}, \texttt{m1Ball\_mul\_le\_f} and \texttt{m1Ball\_eq\_m1}. Corollary~\ref{cor:num} is not formalized: its numerical bounds are quoted from~\cite{Croft,ACMVZ}. \texttt{\#print axioms} reports only \texttt{propext}, \texttt{Classical.choice} and \texttt{Quot.sound} for all of these and for \texttt{wild\_fourier}. The files in \texttt{OAI/} are compiled from source, not taken from a cache, and \texttt{lake env leanchecker Erdos1070} re-checks the whole import closure (Mathlib, \texttt{OAI/} and our files) in the Lean kernel. The Lean proof follows Sections~\ref{sec:conv}--\ref{sec:transfer}; the Følner sets in $E$ are constructed explicitly as above, and the weak-$*$ limits are realized by ultrafilter limits of positive functionals and the Riesz--Markov theorem.

\section*{Acknowledgements}
The negative answer to the second question of Erd\H{o}s is due to D\'ucz and Varga~\cite{DV}. Theorem~\ref{thm:rigidity} and the transfer template are from~\cite{OAI}. AI tools were used in this work: the observation that~\cite[Section~4]{OAI} applies to a single colour class, the write-up and the Lean formalization were produced with Claude (Anthropic), and the manuscript was audited with GPT (OpenAI).

\end{document}